\newtheorem{theorem}{Theorem}
\newtheorem{axiom}{Axiom}
\newtheorem{definition}[axiom]{Definition}
\newtheorem{lemma}[theorem]{Lemma}
\newenvironment{problem}{\pb\rm}{\endpb}
\newtheorem{proposition}[theorem]{Proposition}
\newenvironment{remark}{\rem\rm}{\endrem}
\newcounter{unnumber}
\newenvironment{proof}{\prf\rm}{\hfill{$\blacksquare$}\endprf}
\newcommand{\R}{\mathbb{R}}%
\newcommand{\ol}{\overline}%
\DeclareMathOperator*\dom{dom}%
\DeclareMathOperator*\B{\overline{\R}}%
\DeclareMathOperator*\gr{Gr}%
\DeclareMathOperator*\ran{ran}%
\DeclareMathOperator*\id{Id}%
\DeclareMathOperator*\argmin{argmin}
\DeclareMathOperator*\zer{zer}
\title{Approaching the solving of constrained variational inequalities via penalty term-based dynamical systems}
\author{Radu Ioan Bo\c{t} \thanks{University of Vienna, Faculty of Mathematics, Oskar-Morgenstern-Platz 1, A-1090 Vienna, Austria,
email: radu.bot@univie.ac.at. Research partially supported by DFG (German Research Foundation), project BO 2516/4-1.} \and
Ern\"{o} Robert Csetnek \thanks {University of Vienna, Faculty of Mathematics, Oskar-Morgenstern-Platz 1, A-1090 Vienna, Austria,
email: ernoe.robert.csetnek@univie.ac.at. Research supported by FWF (Austrian Science Fund), Lise Meitner Programme, project M 1682-N25.}}
\begin{document}
\maketitle

\noindent \textbf{Abstract.} We investigate the existence and uniqueness of (locally) absolutely continuous trajectories of a penalty term-based 
dynamical system associated to a constrained variational inequality expressed as a monotone inclusion problem. Relying on Lyapunov analysis and on the
ergodic continuous version of the celebrated Opial Lemma we prove weak ergodic convergence of the orbits 
to a solution of the constrained variational inequality under investigation. If one of the operators involved satisfies stronger monotonicity properties, then 
strong convergence of the trajectories can be shown.\vspace{1ex}

\noindent \textbf{Key Words.} dynamical systems, Lyapunov analysis, monotone inclusions, forward-backward algorithm, penalty schemes \vspace{1ex}

\noindent \textbf{AMS subject classification.} 34G25, 47J25, 47H05, 90C25

\section{Introduction and preliminaries}\label{sec-intr}

This paper is motivated by the increasing interest in solving constrained variational inequalities expressed as monotone inclusion problems of the form
\begin{equation}\label{att-cza-peyp-p}
0\in Ax+N_C(x),
\end{equation}
where ${\cal H}$ is a real Hilbert space, $A:{\cal H}\rightrightarrows{\cal H}$ is a maximally monotone operator, $C=\argmin \Psi$ is the set 
of global minima of the  proper, convex and lower semicontinuous function $\Psi : {\cal H} \rightarrow \overline{\R}:=\R\cup\{\pm\infty\}$ 
fulfilling $\min \Psi=0$ and $N_C:{\cal H}\rightrightarrows{\cal H}$ is the normal cone of the set $C \subseteq {\cal H}$ 
(see \cite{att-cza-10, att-cza-peyp-c, att-cza-peyp-p, noun-peyp, peyp-12, b-c-penalty-svva, b-c-penalty-vjm, banert-bot-pen}). 
One can find in the literature iterative schemes based on the forward-backward paradigm for solving \eqref{att-cza-peyp-p} 
(see \cite{att-cza-peyp-c, att-cza-peyp-p, noun-peyp, peyp-12}), 
that perform in each iteration a proximal step with respect to $A$ and a subgradient step with respect to the penalization function $\Psi$.

Recently, even more complex structures have been analyzed, like monotone inclusion problems of the form
\begin{equation}\label{att-cza-peyp-p-gen}
0\in Ax+Dx+N_C(x),
\end{equation}
where $A :{\cal H}\rightrightarrows{\cal H}$ is a maximally monotone operator, $D :{\cal H}\rightarrow {\cal H}$ is a (single-valued) 
cocoercive operator and $C \subseteq {\cal H}$ is the (nonempty) set of zeros of another cocoercive operator $B :{\cal H}\rightarrow {\cal H}$, see \cite{b-c-penalty-svva, b-c-penalty-vjm, banert-bot-pen}. 

In this paper we are concerned with addressing monotone inclusion problem \eqref{att-cza-peyp-p-gen} from the perspective of dynamical systems. 
More precisely, we associate to this constrained variational inequality a first-order dynamical system  formulated in terms of the resolvent of the 
maximal monotone operator $A$, which has as discrete counterparts penalty-type numerical schemes already considered in the literature in the context
of solving \eqref{att-cza-peyp-p-gen}. Let us mention that dynamical systems of similar implicit type have been investigated in \cite{antipin, bolte-2003, abbas-att-arx14, b-c-dyn-KM}.

In the first part of the manuscript we study the existence and uniqueness of (locally) absolutely continuous trajectories generated by the 
dynamical system, by appealing to arguments based on the Cauchy-Lipschitz-Picard Theorem (see \cite{haraux, sontag}). In the second part 
of the paper we investigate the convergence of the trajectories to a solution of the constrained variational inequality \eqref{att-cza-peyp-p-gen}. 
We use as tools Lyapunov analysis combined with the continuous version of the Opial Lemma. Under the fulfillment of a condition expressed 
in terms of the Fitzpatrick function of the cocoercive operator $B$ we are able to show ergodic weak convergence of the orbits. 
Moreover, if the operator $A$ is strongly monotone, we can prove even strong (non-ergodic) convergence for the generated trajectories. 

For the reader's convenience we present in the following some notations which are used throughout the paper 
(see \cite{bo-van, bauschke-book, simons}).

Let ${\cal H}$ 
be a real Hilbert space with \textit{inner product} $\langle\cdot,\cdot\rangle$ and associated \textit{norm} 
$\|\cdot\|=\sqrt{\langle \cdot,\cdot\rangle}$. The \textit{normal cone} of $S\subseteq {\cal H}$ is defined by $N_S(x)=\{u\in {\cal H}:\langle y-x,u\rangle\leq 0 \ \forall y\in S\}$, if $x\in S$ and $N_S(x)=\emptyset$ 
for $x\notin S$. Notice that for $x\in S$, $u\in N_S(x)$ if and only if $\sigma_S(u)=\langle x,u\rangle$, where $\sigma_S$ is the 
support function of $S$, defined by $\sigma_S(u)=\sup_{y\in S}\langle y,u\rangle$.

For an arbitrary set-valued operator $M:{\cal H}\rightrightarrows {\cal H}$ we denote by 
$\gr M=\{(x,u)\in {\cal H}\times {\cal H}:u\in Mx\}$ its \emph{graph}, by $\dom M=\{x \in {\cal H} : Mx \neq \emptyset\}$ its 
\emph{domain}, by $\ran M=\{u\in {\cal H}: \exists x\in {\cal H} \mbox{ s.t. }u\in Mx\}$ its \emph{range} and 
$M^{-1}:{\cal H}\rightrightarrows {\cal H}$ its \emph{inverse operator}, defined by $(u,x)\in\gr M^{-1}$ if and only if $(x,u)\in\gr M$. 
We use also the notation $\zer M=\{x\in {\cal H}: 0\in Mx\}$ for the \textit{set of zeros} of the operator $M$. 
We say that $M$ is \emph{monotone} if $\langle x-y,u-v\rangle\geq 0$ for all $(x,u),(y,v)\in\gr M$. A monotone operator $M$ is 
said to be \emph{maximally monotone}, if there exists no proper monotone extension of the graph of $M$ on ${\cal H}\times {\cal H}$. 
Let us mention that in case $M$ is maximally monotone, $\zer M$ is a convex and closed set \cite[Proposition 23.39]{bauschke-book}. 
We refer to \cite[Section 23.4]{bauschke-book} for 
conditions ensuring that $\zer M$ is nonempty.  If $M$ is maximally monotone, then one has the following characterization for the set of its zeros:
\begin{equation}\label{charact-zeros-max}
z\in\zer M \mbox{ if and only if }\langle u-z,w\rangle\geq 0\mbox{ for all }(u,w)\in \gr M.
\end{equation}

The operator $M$ is said to be \emph{$\gamma$-strongly monotone} with $\gamma>0$, if $\langle x-y,u-v\rangle\geq \gamma\|x-y\|^2$ for 
all $(x,u),(y,v)\in\gr M$. Notice that if $M$ is maximally monotone and strongly monotone, then $\zer M$ is a singleton, 
thus nonempty (see \cite[Corollary 23.37]{bauschke-book}).

The \emph{resolvent} of $M$, $J_M:{\cal H} \rightrightarrows {\cal H}$, is defined by $J_M=(\id+M)^{-1}$, where 
$\id :{\cal H} \rightarrow {\cal H}, \id(x) = x$ for all $x \in {\cal H}$, is the \textit{identity operator} on ${\cal H}$. 
Moreover, if $M$ is maximally monotone, then $J_M:{\cal H} \rightarrow {\cal H}$ is single-valued and maximally monotone 
(cf. \cite[Proposition 23.7 and Corollary 23.10]{bauschke-book}). We will also use the \emph{Yosida approximation} of the operator $M$, 
which is defined by $M_{\alpha}=\frac{1}{\alpha}(\id -J_{\alpha M})$, for $\alpha>0$. 

The \emph{Fitzpatrick function} associated to a monotone operator $M$, defined as
$$\varphi_M:{\cal H}\times {\cal H}\rightarrow \B, \ \varphi_M(x,u)=\sup_{(y,v)\in\gr M}\{\langle x,v\rangle+\langle y,u\rangle-\langle y,v\rangle\},$$
is a convex and lower semicontinuous function and it will play an important role throughout the paper. Introduced by Fitzpatrick in \cite{fitz}, 
this notion opened the gate towards the employment of convex analysis specific tools when investigating the maximality of monotone operators 
(see \cite{bauschke-book, bausch-m-s, bo-van, bu-sv-02, simons, b-hab, borw-06, BCW-set-val} and the references therein). 
In case $M$ is maximally monotone, $\varphi_M$ is proper and it fulfills
$$\varphi_M(x,u)\geq \langle x,u\rangle \ \forall (x,u)\in {\cal H}\times {\cal H},$$
with equality if and only if $(x,u)\in\gr M$. We refer the reader to \cite{bausch-m-s}, for formulae of the corresponding Fitzpatrick functions 
computed for particular classes of monotone operators.

Let $\gamma>0$ be arbitrary. A single-valued operator $M:{\cal H}\rightarrow {\cal H}$ is said to be \textit{$\gamma$-cocoercive}, if 
$\langle x-y,Mx-My\rangle\geq \gamma\|Mx-My\|^2$ for all $(x,y)\in {\cal H}\times {\cal H}$, and \textit{$\gamma$-Lipschitz continuous}, 
if $\|Mx-My\|\leq \gamma\|x-y\|$ for all $(x,y)\in {\cal H}\times {\cal H}$. 

In this paper we are concerned with the solving of the following constrained variational inequality expressed as monotone inclusion problem 
(see also \cite{b-c-penalty-svva}). 

\begin{problem}\label{pr-cocoercive-single-val}
Let ${\cal H}$ be a real Hilbert space, $A:{\cal H}\rightrightarrows {\cal H}$ a maximally monotone operator, 
$D:{\cal H}\rightarrow{\cal H}$ an $\eta$-cocoercive operator with $\eta>0$, $B:{\cal H}\rightarrow{\cal H}$ a 
$\mu$-cocoercive operator with $\mu>0$ and suppose that $C=\zer B \neq\emptyset$. The monotone inclusion problem to solve is
$$0\in Ax+Dx+N_C(x).$$
\end{problem}

Let us mention that a (discrete) iterative scheme for solving this problem has been proposed and investigated in \cite{att-cza-peyp-c} 
for $D$ taken as zero operator and $B$ as the gradient of a convex and differentiable function with Lipschitz continuous gradient. 

\section{A penalty term-based dynamical system}\label{sc2}

We associate to Problem \ref{pr-cocoercive-single-val} the following dynamical system: 

\begin{equation}\label{dyn-syst-pen-fb}\left\{
\begin{array}{ll}
\dot x(t)+x(t)=J_{\lambda(t) A}\Big(x(t)-\lambda(t) D x(t)-\lambda(t)\beta(t) Bx(t)\Big)\\
x(0)=x_0,
\end{array}\right.\end{equation}
where $x_0\in {\cal H}$ is fixed and $\lambda,\beta:[0,+\infty)\rightarrow(0,+\infty)$. 

\begin{remark} (i) The dynamical system \eqref{dyn-syst-pen-fb} can be seen as an extension of similar implicit first-order constructions considered in the last years in the literature.
For instance, the resulting dynamical system when $B$ is the zero operator and $\lambda$ is a constant function has been investigated 
in \cite{b-c-dyn-KM} in connection with approaching the set of zeros of $A+D$. Moreover, the situation when $A$ is the convex subdifferential of a proper, convex and 
lower semicontinuous function has been addressed in \cite{abbas-att-arx14}, while the even more particular case when this function 
is the indicator function of a nonempty, convex and closed subset of ${\cal H}$ has been considered in \cite{bolte-2003}.

(ii) The explicit discretization of \eqref{dyn-syst-pen-fb} with respect to the time variable $t$, with step size $h_n>0$, yields for 
an initial point $x_0\in {\cal H}$ the following iterative scheme 
$$\frac{x_{n+1}-x_n}{h_n}+x_n=J_{\lambda_n A}\Big(x_n-\lambda_n D x_n-\lambda_n\beta_n Bx_n\Big) \ \forall n \geq 0,$$
which for $h_n=1$ becomes 
\begin{equation}\label{discrete}x_{n+1}=J_{\lambda_n A}\Big(x_n-\lambda_n D x_n-\lambda_n\beta_n Bx_n\Big) \ \forall n \geq 0,\end{equation}
where $(\lambda_n)_{n\geq 0},(\beta_n)_{n\geq 0}$ are sequences of positive real numbers. 

Let us mention that a convergence analysis for \eqref{discrete} has been carried out in \cite{b-c-penalty-svva}. The case when $D$ is the zero operator
has been addressed in \cite{att-cza-peyp-c} under the supplementary assumption that $B$ is the gradient of a convex and differentiable function with 
Lipschitz continuous gradient. Other penalty-type iterative schemes have been considered in the context of solving monotone inclusion problems and 
convex optimization problems in \cite{att-cza-peyp-p, peyp-12, noun-peyp, banert-bot-pen, b-c-penalty-vjm}. 
\end{remark}

As in \cite{att-sv2011, abbas-att-sv}, we consider the following definition of an absolutely continuous function.

\begin{definition}\label{abs-cont} \rm A function $f:[0,b]\rightarrow {\cal H}$ (where $b>0$) is said to be absolutely continuous if one of the 
following equivalent properties holds: 

(i)  there exists an integrable function $g:[0,b]\rightarrow {\cal H}$ such that $$f(t)=f(0)+\int_0^t g(s)ds \ \ \forall t\in[0,b];$$

(ii) $f$ is continuous and its distributional derivative is Lebesgue integrable on $[0,b]$; 

(iii) for every $\varepsilon > 0$, there exists $\eta >0$ such that for any finite family of intervals $I_k=(a_k,b_k)$ we have the implication:
$$\left(I_k\cap I_j=\emptyset \mbox{ and }\sum_k|b_k-a_k| < \eta\right)\Longrightarrow \sum_k\|f(b_k)-f(a_k)\| < \varepsilon.$$
\end{definition}

\begin{remark}\label{rem-abs-cont}\rm (a) It follows from the above definition that an absolutely continuous function is differentiable almost 
everywhere, its derivative coincides with its distributional derivative almost everywhere and one can recover the function from its derivative $f'=g$ 
by the integration formula (i). 

(b) If $f:[0,b]\rightarrow {\cal H}$ (where $b>0$) is absolutely continuous and $B:{\cal H}\rightarrow {\cal H}$ is $L$-Lipschitz continuous
(where $L\geq 0$), then the function $h=B\circ f$ is absolutely continuous, too. This can be easily verified by considering the characterization in
Definition \ref{abs-cont}(iii). Moreover, $h$ is almost everywhere differentiable and the inequality $\|h'(\cdot)\|\leq L\|f'(\cdot)\|$ holds almost everywhere.   
\end{remark}

\begin{definition}\label{str-sol}\rm We say that $x:[0,+\infty)\rightarrow {\cal H}$ is a strong global solution of \eqref{dyn-syst-pen-fb} if the 
following properties are satisfied: 

(i) $x:[0,+\infty)\rightarrow {\cal H}$ is absolutely continuous on each interval $[0,b]$, $0<b<+\infty$; 

(ii) $\dot x(t)+x(t)=J_{\lambda(t) A}(x(t)-\lambda(t) D x(t)-\lambda(t)\beta(t) Bx(t))$ for almost every $t\in[0,+\infty)$;

(iii) $x(0)=x_0$.
\end{definition}

In what follows we discuss the existence and uniqueness of strong global solutions of \eqref{dyn-syst-pen-fb}. To this end we use the Cauchy-Lipschitz theorem for absolutely continuous trajectories (see for example 
\cite[Proposition 6.2.1]{haraux}, \cite[Theorem 54]{sontag}). To this end we will make use of the following Lipschitz property of the resolvent operator as a function of the step size, which actually is a consequence of 
the classical results \cite[Proposition 2.6]{brezis} and \cite[Proposition 23.28]{bauschke-book}; see also \cite[Proposition 3.1]{abbas-att-sv}. 

\begin{proposition}\label{prop-Lipsch-res-step-size} Let $A:{\cal H}\rightrightarrows {\cal H}$ be a maximally monotone operator, $x\in {\cal H}$ and $0<\delta<+\infty$. 
Then the mapping $\tau\mapsto J_{\tau A}x$ is Lipschitz continuous on $[\delta,+\infty)$. More precisely, for any $\lambda,\mu\in[\delta,+\infty)$ the following inequality holds:
\begin{equation}\label{Lipsch-res-step-size} \|J_{\lambda A}x-J_{\mu A}x\|\leq |\lambda-\mu|\|A_\delta x\|.
\end{equation}
\end{proposition}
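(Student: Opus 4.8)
The plan is to reduce the estimate to the nonexpansiveness of the resolvent, combined with two classical facts about the Yosida approximation: the \emph{resolvent identity} and the monotonicity of $\tau\mapsto\|A_\tau x\|$. By the symmetric roles of $\lambda$ and $\mu$ we may assume $\lambda\geq\mu$, both lying in $[\delta,+\infty)$.

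First I would record the resolvent identity
$$J_{\lambda A}x=J_{\mu A}\Big(\tfrac{\mu}{\lambda}x+\big(1-\tfrac{\mu}{\lambda}\big)J_{\lambda A}x\Big),$$
which follows directly from the definition $J_{\lambda A}=(\id+\lambda A)^{-1}$: writing $p:=J_{\lambda A}x$, so that $\tfrac1\lambda(x-p)\in Ap$, the point $y:=\tfrac{\mu}{\lambda}x+(1-\tfrac{\mu}{\lambda})p$ satisfies $y-p=\tfrac{\mu}{\lambda}(x-p)$, hence $\tfrac1\mu(y-p)=\tfrac1\lambda(x-p)\in Ap$, i.e. $p=J_{\mu A}y$. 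Since $A$ is maximally monotone, $J_{\mu A}$ is nonexpansive, so applying it to the identity above and using $y-x=(1-\tfrac{\mu}{\lambda})(J_{\lambda A}x-x)$ together with $\|x-J_{\lambda A}x\|=\lambda\|A_\lambda x\|$ gives
$$\|J_{\lambda A}x-J_{\mu A}x\|=\|J_{\mu A}y-J_{\mu A}x\|\leq\|y-x\|=\frac{|\lambda-\mu|}{\lambda}\,\|x-J_{\lambda A}x\|=|\lambda-\mu|\,\|A_\lambda x\|.$$

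It then remains to prove $\|A_\lambda x\|\leq\|A_\delta x\|$, and for this it suffices to show that $\tau\mapsto\|A_\tau x\|$ is nonincreasing on $(0,+\infty)$. For $0<\mu\leq\lambda$, put $a:=x-J_{\lambda A}x=\lambda A_\lambda x$ and $b:=x-J_{\mu A}x=\mu A_\mu x$; from $\tfrac1\lambda a\in A(J_{\lambda A}x)$, $\tfrac1\mu b\in A(J_{\mu A}x)$ and the monotonicity of $A$ one obtains $\langle b-a,\tfrac1\lambda a-\tfrac1\mu b\rangle\geq 0$, which after expansion and the Cauchy--Schwarz bound $\langle a,b\rangle\leq\|a\|\,\|b\|$ reduces, with $\alpha:=\|A_\lambda x\|$ and $\beta:=\|A_\mu x\|$, to $(\alpha-\beta)(\mu\beta-\lambda\alpha)\geq 0$; if $\alpha>\beta$ this forces $\mu\beta\geq\lambda\alpha\geq\mu\alpha$, hence $\beta\geq\alpha$, a contradiction, so $\alpha\leq\beta$. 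Combining this with the previous display and $\lambda\geq\delta$ yields \eqref{Lipsch-res-step-size}. Alternatively, both the resolvent identity and the monotonicity of the Yosida norm may simply be quoted from \cite{brezis,bauschke-book} (see also \cite{abbas-att-sv}). I do not anticipate any real obstacle here; the only point requiring care is to carry out the reduction so that the final bound comes out in terms of $\|A_\delta x\|$, using $\lambda,\mu\geq\delta$, rather than in terms of $\|A_\lambda x\|$ or $\|A_\mu x\|$.
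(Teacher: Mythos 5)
Your proof is correct, and it follows exactly the route the paper takes: the paper gives no proof of its own but simply cites the resolvent identity and the monotonicity of $\tau\mapsto\|A_\tau x\|$ from \cite{brezis} and \cite{bauschke-book}, which are precisely the two ingredients you derive and combine. Your write-up is a sound self-contained version of that cited argument, including the correct final reduction from $\|A_\lambda x\|$ to $\|A_\delta x\|$ via $\lambda\geq\delta$.
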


For proving the existence of strong global solutions of \eqref{dyn-syst-pen-fb}, we need the following natural assumption:
$${\rm (H1)} \ \  \lambda,\beta:[0,+\infty)\rightarrow(0,+\infty) \mbox{ are continuous on each interval }[0,b], \ \mbox{for} \ 0<b<+\infty.$$

Notice that the dynamical system \eqref{dyn-syst-pen-fb} can be written as 
\begin{equation}\label{existence}\left\{
\begin{array}{ll}
\dot x(t)=f(t,x(t))\\
x(0)=x_0,
\end{array}\right.\end{equation}
where $f:[0,+\infty)\times {\cal H}\rightarrow {\cal H}$ is defined by 
\begin{equation}\label{f-existence}f(t,x)=\left[J_{\lambda(t)A}\circ \big(\id - \lambda(t)D-\lambda(t)\beta(t)B\big)-\id\right]x.\end{equation}

(a) We claim that for every $t\geq 0$ and every $x,y\in {\cal H}$ we have  

\begin{equation}\label{lipsch-x}\|f(t,x)-f(t,y)\|\leq \left(2+\frac{\lambda(t)}{\eta}+\frac{\lambda(t)\beta(t)}{\mu}\right)\|x-y\|.\end{equation}

Indeed, since the resolvent operator is nonexpansive (see \cite[Corollary 23.10 and Definition 4.1]{bauschke-book}), $D$ is ($1/\eta$)-Lipschitz continuous
and $B$ is ($1/\mu$)-Lipschitz continuous, it holds
\begin{align*}
\|f(t,x)-f(t,y)\|  \leq & \left\|J_{\lambda(t) A}\big(x-\lambda(t)Dx-\lambda(t)\beta(t)Bx\big)-
J_{\lambda(t) A}\big(y-\lambda(t)Dy-\lambda(t)\beta(t)By\big)\right\|\\
                      & +\|x-y\|\\
                     \leq & \lambda(t)\|Dx-Dy\|+\lambda(t)\beta(t)\|Bx-By\|+2\|x-y\|\\
                     \leq & \left(2+\frac{\lambda(t)}{\eta}+\frac{\lambda(t)\beta(t)}{\mu}\right)\|x-y\|,
\end{align*}
hence \eqref{lipsch-x} holds. Further, notice that due to (H1), 
$$L_f: [0,+\infty) \rightarrow \R, L_f(t) = 2+\frac{\lambda(t)}{\eta}+\frac{\lambda(t)\beta(t)}{\mu},$$
which is for every $t \geq 0$ equal to the Lipschitz-constant of $f(t,\cdot)$, satisfies
$$L_f(\cdot)\in L^1([0,b]) \mbox{ for any } 0<b<+\infty.$$

(b) We show now that \begin{equation}\label{existence-b}\forall x\in{\cal H}, \ \forall b>0, \ \ f(\cdot,x)\in L^1([0,b],{\cal H}).\end{equation}

Let us fix $x\in{\cal H}$ and $b>0$. Due to (H1), there exist $\lambda_{min},\beta_{min}>0$ such that
$$0<\lambda_{min}\leq\lambda(t)\mbox{ and }0<\beta_{min}\leq\beta(t) \ \forall t\in[0,b].$$

We obtain for all $t\in[0,b]$ the following chain of inequalities: 
\begin{align*}
\|f(t,x)\| \leq & \ \|x\|+\left\|J_{\lambda(t)A}\big(x - \lambda(t)Dx-\lambda(t)\beta(t)Bx\big)\right\|\\
\leq & \ \|x\|+\|x-\lambda(t)Dx-\lambda(t)\beta(t)Bx-x+\lambda_{min}Dx+\lambda_{min}\beta_{min}Bx\|\\
& \ +\left\|J_{\lambda(t)A}\big(x - \lambda_{min}Dx-\lambda_{min}\beta_{min}Bx\big)\right\|\\
\leq & \ \|x\|+(\lambda(t)-\lambda_{min})\|Dx\|+(\lambda(t)\beta(t)-\lambda_{min}\beta_{min})\|Bx\|\\
& \ +(\lambda(t)-\lambda_{min})\|A_{\lambda_{min}}(x-\lambda_{min}Dx-\lambda_{min}\beta_{min}Bx)\|\\
& \ +\left\|J_{\lambda_{min}A}\big(x - \lambda_{min}Dx-\lambda_{min}\beta_{min}Bx\big)\right\|,
\end{align*}
where in the second inequality we used the nonexpansiveness of the resolvent operator and in the third one the statement of Proposition \ref{prop-Lipsch-res-step-size}. 
The claim \eqref{existence-b} follows now easily by integrating and by taking into account (H1). 

In the light of the statements proven in (a) and (b), the existence and uniqueness of a strong global solution of the dynamical system 
\eqref{dyn-syst-pen-fb} follow from \cite[Proposition 6.2.1]{haraux} (see also \cite[Theorem 54]{sontag}).

\section{Convergence of the generated trajectories}

In this section we investigate the convergence properties of the trajectories generated by the dynamical system \eqref{dyn-syst-pen-fb}. Our analysis 
relies on Lyapunov analysis combined with the continuous ergodic version of the Opial Lemma.  

We split the proof of the convergence into several lemmas. 

\begin{lemma}\label{l-fej1} 
Consider the setting of Problem \ref{pr-cocoercive-single-val} and the associated dynamical system \eqref{dyn-syst-pen-fb} under the assumption 
that {\rm (H1)} holds. Take $(z,w)\in\gr (A+D+N_C)$ such that $w=v+p+Dz$, where $v\in Az$ and $p\in N_C(z)$. 
Then the following inequality holds for almost every $t\geq 0$
\begin{align}\label{fej1}
& \frac{d}{dt}\|x(t)-z\|^2+\lambda(t)(2\eta-3\lambda(t))\|Dx(t)-Dz\|^2\leq \nonumber \\ 
& 2\lambda(t)\beta(t)\left[\sup_{u\in C}\varphi_B\left(u,\frac{p}{\beta(t)}\right)-\sigma_C\left(\frac{p}{\beta(t)}\right)\right] +3\lambda^2(t)\beta^2(t)\|Bx(t)\|^2 + 3\lambda^2(t)\|Dz+v\|^2+\nonumber\\
& 2\lambda(t)\langle z-x(t),w\rangle.
\end{align}
\end{lemma}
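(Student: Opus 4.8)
The plan is to convert the resolvent equation into a monotone inclusion for $A$ along the trajectory, insert it into the monotonicity inequality against the reference pair $(z,v)\in\gr A$, and then control the three resulting cross terms — the velocity term, the cocoercive $D$-term, and the penalty/normal-cone combination — separately. Since $J_{\lambda(t)A}=(\id+\lambda(t)A)^{-1}$, Definition \ref{str-sol}(ii) is equivalent to
$$-\frac{1}{\lambda(t)}\dot x(t)-Dx(t)-\beta(t)Bx(t)\in A\big(\dot x(t)+x(t)\big)\quad\text{for a.e. }t\ge0.$$
Writing $\lambda=\lambda(t)$, $\beta=\beta(t)$ and $x=x(t)$, monotonicity of $A$ applied to this pair and to $(z,v)\in\gr A$, after multiplication by $\lambda>0$ and by $2$, together with the identity $\langle\dot x+x-z,\dot x\rangle=\|\dot x\|^2+\tfrac12\tfrac{d}{dt}\|x-z\|^2$, gives
$$\frac{d}{dt}\|x-z\|^2\le-2\|\dot x\|^2-2\lambda\langle\dot x,Dx+\beta Bx+v\rangle-2\lambda\langle x-z,Dx+\beta Bx+v\rangle .$$

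I would then substitute $Dx+\beta Bx+v=(Dx-Dz)+\beta Bx+(Dz+v)$, using $w=v+p+Dz$. For the velocity term, Cauchy--Schwarz, $2ab\le a^2+b^2$ and $\|a+b+c\|^2\le3\|a\|^2+3\|b\|^2+3\|c\|^2$ yield
$$-2\lambda\langle\dot x,Dx+\beta Bx+v\rangle\le\|\dot x\|^2+3\lambda^2\|Dx-Dz\|^2+3\lambda^2\beta^2\|Bx\|^2+3\lambda^2\|Dz+v\|^2 ,$$
and the remaining $-2\|\dot x\|^2$ absorbs the leftover $\|\dot x\|^2$. In the last term, $\eta$-cocoercivity of $D$ gives $-2\lambda\langle x-z,Dx-Dz\rangle\le-2\lambda\eta\|Dx-Dz\|^2$, while $-2\lambda\langle x-z,Dz+v\rangle=-2\lambda\langle x-z,w-p\rangle=2\lambda\langle z-x,w\rangle+2\lambda\langle x-z,p\rangle$. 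Collecting the two $\|Dx-Dz\|^2$ contributions on the left-hand side produces the coefficient $\lambda(2\eta-3\lambda)$.

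The crux is the penalty/normal-cone term $2\lambda\langle x-z,p\rangle-2\lambda\beta\langle x-z,Bx\rangle=2\lambda\beta\,\langle x-z,\tfrac{p}{\beta}-Bx\rangle$, which I would dominate by the Fitzpatrick function of $B$. Put $q=p/\beta$ and expand $\langle x-z,q-Bx\rangle=\langle x,q\rangle-\langle x,Bx\rangle-\langle z,q\rangle+\langle z,Bx\rangle$. Using the choice $y=x$ in the supremum defining $\varphi_B$ gives, for every $u\in C$, $\varphi_B(u,q)\ge\langle u,Bx\rangle+\langle x,q\rangle-\langle x,Bx\rangle$; passing to the supremum over $u\in C$ and invoking $z\in C$ (which holds because $N_C(z)\ne\emptyset$, and which also gives $\sup_{u\in C}\langle u,Bx\rangle\ge\langle z,Bx\rangle$) yields $\langle x,q\rangle-\langle x,Bx\rangle+\langle z,Bx\rangle\le\sup_{u\in C}\varphi_B(u,q)$. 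Finally, since $p\in N_C(z)$ one has $\langle z,p\rangle=\sigma_C(p)$, hence $\langle z,q\rangle=\sigma_C(q)$ by positive homogeneity of $\sigma_C$; therefore $\langle x-z,q-Bx\rangle\le\sup_{u\in C}\varphi_B(u,p/\beta)-\sigma_C(p/\beta)$. Substituting this, the cocoercivity bound and the velocity estimate into the displayed inequality gives exactly \eqref{fej1}. I expect this last step to be the main obstacle: one must recognize that the penalty term combines with the normal-cone term into something measured by $\varphi_B$ and that subtracting $\sigma_C(p/\beta)$ is justified precisely by the equality $\sigma_C(p)=\langle z,p\rangle$ forced by $p\in N_C(z)$; the rest is routine manipulation with monotonicity and Young-type inequalities, valid for a.e. $t$, where $\dot x(t)$ exists and the inclusion in Definition \ref{str-sol}(ii) holds.
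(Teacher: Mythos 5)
Your proposal is correct and follows essentially the same route as the paper's proof: the resolvent inclusion combined with the monotonicity of $A$ against $(z,v)$, Young's inequality on the velocity term with the three-way splitting $Dx+\beta Bx+v=(Dx-Dz)+\beta Bx+(Dz+v)$, cocoercivity of $D$, and the Fitzpatrick bound obtained by choosing $(x(t),Bx(t))\in\gr B$ in the supremum together with $\sigma_C(p/\beta)=\langle z,p/\beta\rangle$. No gaps.
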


\begin{proof} From the definition of the resolvent we have for almost every $t \geq 0$ 
$$-\frac{1}{\lambda(t)}\dot x(t)-Dx(t)-\beta(t)Bx(t)\in A(\dot x(t)+x(t)),$$ 
which combined with $v\in Az$ and the monotonicity of $A$ gives 
$$\langle \dot x(t)+x(t)-z, \dot x(t)\rangle\leq \lambda(t)\langle z-\dot x(t)-x(t), \beta(t)Bx(t)+Dx(t)+v\rangle.$$
From here it follows that for almost every $t \geq 0$
\begin{align*}
\frac{d}{dt}\|x(t)-z\|^2 = & \ 2\langle \dot x(t)+x(t)-z, \dot x(t)\rangle - 2\|\dot x(t)\|^2 \\
\leq & \ 2\lambda(t)\langle z-\dot x(t)-x(t), \beta(t)Bx(t)+Dx(t)+v\rangle-2\|\dot x(t)\|^2 \\
= & \ 2\lambda(t)\langle z-x(t), \beta(t)Bx(t)+Dx(t)+v\rangle\\
& \ +2\lambda(t)\langle -\dot x(t), \beta(t)Bx(t)+Dx(t)+v\rangle-2\|\dot x(t)\|^2\\
\leq & \ 2\lambda(t)\langle z-x(t), \beta(t)Bx(t)+Dx(t)+v\rangle+\lambda^2(t)\|\beta(t)Bx(t)+Dx(t)+v\|^2\\
\leq & \ 2\lambda(t)\langle z-x(t), \beta(t)Bx(t)+Dx(t)+v\rangle+3\lambda^2(t)\beta^2(t)\|Bx(t)\|^2\\
& \ +3\lambda^2(t)\|Dz+v\|^2+3\lambda^2(t)\|Dx(t)-Dz\|^2.
\end{align*}
It remains to evaluate the first term on the right-hand side of the last of the above inequalities. By noticing that $v=w-Dz-p$, from the 
cocoercivity of $D$, the definition of the Fitzpatrick function and using that $\sigma_C\left(\frac{p}{\beta(t)}\right)=\left\langle z,\frac{p}{\beta(t)}\right\rangle$, we obtain for almost every $t \geq 0$
\begin{align*}
& 2\lambda(t)\langle z-x(t), \beta(t)Bx(t)+Dx(t)+w-Dz-p\rangle= \\
& 2\lambda(t)\langle z-x(t), Dx(t)-Dz\rangle+2\lambda(t)\langle z-x(t), \beta(t)Bx(t)-p\rangle+2\lambda(t)\langle z-x(t),w\rangle = \\
& 2\lambda(t)\langle z-x(t), Dx(t)-Dz\rangle +\\ 
& 2\lambda(t)\beta(t)\left[\langle z,Bx(t)\rangle+\left\langle x(t),\frac{p}{\beta(t)}\right\rangle- \langle x(t),Bx(t)\rangle-\left\langle z,\frac{p}{\beta(t)}\right\rangle\right] + 2\lambda(t)\langle z-x(t),w\rangle \leq\\
& -2\eta\lambda(t)\|Dx(t)-Dz\|^2+2\lambda(t)\beta(t)\left[\sup_{u\in C}\varphi_B\left(u,\frac{p}{\beta(t)}\right)-\sigma_C\left(\frac{p}{\beta(t)}\right)\right] + 2\lambda(t)\langle z-x(t),w\rangle
\end{align*}
and the desired conclusion follows.  
\end{proof}

\begin{lemma}\label{l-fej2} Consider the setting of Problem \ref{pr-cocoercive-single-val} and the associated dynamical system \eqref{dyn-syst-pen-fb} under the assumption 
that {\rm (H1)} holds. Let  $z\in C\cap\dom A$ and $v\in Az$. Then for every $\varepsilon\geq 0$ and almost every $t\geq 0$ we have
\begin{align}\label{fej2}
& \frac{d}{dt}\|x(t)-z\|^2+ \frac{1+2\varepsilon}{1+\varepsilon}\|\dot x(t)\|^2+\frac{2\varepsilon}{1+\varepsilon}\lambda(t)\beta(t)\langle x(t)-z,Bx(t)\rangle \leq \nonumber \\
& \lambda(t)\beta(t)\left((1+\varepsilon)\lambda(t)\beta(t)-\frac{2\mu}{1+\varepsilon}\right)\|Bx(t)\|^2+2\lambda(t)\langle z-\dot x(t)-x(t),Dx(t)+v\rangle.
\end{align}
\end{lemma}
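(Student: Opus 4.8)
The plan is to mimic the opening of the proof of Lemma~\ref{l-fej1}. First I would invoke the definition of the resolvent together with the second line of \eqref{dyn-syst-pen-fb} to obtain, for almost every $t\geq 0$,
$$-\frac{1}{\lambda(t)}\dot x(t)-Dx(t)-\beta(t)Bx(t)\in A(\dot x(t)+x(t)),$$
and combine this with $v\in Az$ and the monotonicity of $A$ to get
$$\langle \dot x(t)+x(t)-z,\dot x(t)\rangle\leq \lambda(t)\langle z-\dot x(t)-x(t), Dx(t)+v+\beta(t)Bx(t)\rangle .$$
Since $\frac{d}{dt}\|x(t)-z\|^2=2\langle \dot x(t)+x(t)-z,\dot x(t)\rangle-2\|\dot x(t)\|^2$, this yields
$$\frac{d}{dt}\|x(t)-z\|^2\leq 2\lambda(t)\langle z-\dot x(t)-x(t),Dx(t)+v\rangle+2\lambda(t)\beta(t)\langle z-\dot x(t)-x(t),Bx(t)\rangle-2\|\dot x(t)\|^2,$$
where the first term on the right is already the one appearing in \eqref{fej2}, so it only remains to estimate the last two terms.

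For the penalization contribution I would split
$$2\lambda(t)\beta(t)\langle z-x(t),Bx(t)\rangle=-\frac{2\varepsilon}{1+\varepsilon}\lambda(t)\beta(t)\langle x(t)-z,Bx(t)\rangle-\frac{2}{1+\varepsilon}\lambda(t)\beta(t)\langle x(t)-z,Bx(t)\rangle ,$$
keep the first summand on the left-hand side of the inequality, and use $Bz=0$ together with the $\mu$-cocoercivity of $B$, i.e.\ $\langle x(t)-z,Bx(t)\rangle\geq\mu\|Bx(t)\|^2$, to bound the second summand by $-\frac{2\mu}{1+\varepsilon}\lambda(t)\beta(t)\|Bx(t)\|^2$. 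The term containing $\dot x(t)$ is handled by Young's inequality with the parameter chosen so as to reproduce the stated coefficient of $\|\dot x(t)\|^2$: from $\big\|(1+\varepsilon)^{-1/2}\dot x(t)+(1+\varepsilon)^{1/2}\lambda(t)\beta(t)Bx(t)\big\|^2\geq 0$ one gets
$$2\lambda(t)\beta(t)\langle -\dot x(t),Bx(t)\rangle\leq \frac{1}{1+\varepsilon}\|\dot x(t)\|^2+(1+\varepsilon)\lambda^2(t)\beta^2(t)\|Bx(t)\|^2 ,$$
so that, together with the $-2\|\dot x(t)\|^2$ already present and $2-\frac{1}{1+\varepsilon}=\frac{1+2\varepsilon}{1+\varepsilon}$, the $\|\dot x(t)\|^2$-contributions assemble into $-\frac{1+2\varepsilon}{1+\varepsilon}\|\dot x(t)\|^2$. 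Collecting the $\|Bx(t)\|^2$-terms produces the coefficient $\lambda(t)\beta(t)\big((1+\varepsilon)\lambda(t)\beta(t)-\frac{2\mu}{1+\varepsilon}\big)$, and moving the $\|\dot x(t)\|^2$- and $\frac{2\varepsilon}{1+\varepsilon}\lambda(t)\beta(t)\langle x(t)-z,Bx(t)\rangle$-terms to the left-hand side yields exactly \eqref{fej2}.

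I do not expect a serious obstacle here: the argument is essentially a bookkeeping exercise built from monotonicity of $A$, cocoercivity of $B$, and Young's inequality. The only point that requires care is the choice of the two free parameters -- the weight $\frac{2\varepsilon}{1+\varepsilon}$ in the splitting of $\langle z-x(t),Bx(t)\rangle$ and the Young parameter $\frac{1}{1+\varepsilon}$ in the estimate of $\langle -\dot x(t),Bx(t)\rangle$ -- which must be picked precisely so that every coefficient collapses to the one claimed; once these are fixed, the conclusion drops out. As a consistency check, setting $\varepsilon=0$ recovers the basic Lyapunov-type estimate $\frac{d}{dt}\|x(t)-z\|^2+\|\dot x(t)\|^2\leq \lambda(t)\beta(t)\big(\lambda(t)\beta(t)-2\mu\big)\|Bx(t)\|^2+2\lambda(t)\langle z-\dot x(t)-x(t),Dx(t)+v\rangle$.
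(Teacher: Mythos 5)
Your proposal is correct and follows essentially the same route as the paper: the paper likewise starts from the inner-product inequality obtained from the resolvent definition and the monotonicity of $A$ (which it simply cites from the proof of Lemma \ref{l-fej1} rather than re-deriving), splits $2\lambda(t)\beta(t)\langle z-x(t),Bx(t)\rangle$ with the same $\frac{2}{1+\varepsilon}+\frac{2\varepsilon}{1+\varepsilon}$ weights before applying the cocoercivity of $B$ with $Bz=0$, and uses the identical Young estimate $2\lambda(t)\beta(t)\langle -\dot x(t),Bx(t)\rangle\leq \frac{1}{1+\varepsilon}\|\dot x(t)\|^2+(1+\varepsilon)\lambda^2(t)\beta^2(t)\|Bx(t)\|^2$. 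All coefficients assemble exactly as you describe, so there is nothing to add.
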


\begin{proof} As it follows from the first inequality obtained in the proof of Lemma \ref{l-fej1}, we have for almost every $t \geq 0$
\begin{align*}
 \frac{d}{dt}\|x(t)-z\|^2 + 2\|\dot x(t)\|^2 \leq & \ 2\lambda(t)\beta(t)\langle z-x(t),Bx(t)\rangle+2\lambda(t)\beta(t)\langle -\dot x(t),Bx(t)\rangle\\
                                                & \ +2\lambda(t)\langle z-\dot x(t)-x(t),Dx(t)+v\rangle.
\end{align*}
Since $B$ is $\mu$-cocoercive and $Bz=0$, we have for almost every $t \geq 0$
$$\langle z-x(t),Bx(t)\rangle\leq -\mu\|Bx(t)\|^2,$$
hence $$2\lambda(t)\beta(t)\langle z-x(t),Bx(t)\rangle\leq -\frac{2\mu}{1+\varepsilon}\lambda(t)\beta(t)\|Bx(t)\|^2+ \frac{2\varepsilon}{1+\varepsilon}\lambda(t)\beta(t)\langle z-x(t),Bx(t)\rangle.$$

The conclusion follows by using that for almost every $t \geq 0$
$$2\lambda(t)\beta(t)\langle -\dot x(t),Bx(t)\rangle\leq \frac{1}{1+\varepsilon}\|\dot x(t)\|^2+(1+\varepsilon)\lambda^2(t)\beta^2(t)\|Bx(t)\|^2.$$
\end{proof}

\begin{lemma}\label{l-fej3} Consider the setting of Problem \ref{pr-cocoercive-single-val} and the associated dynamical system \eqref{dyn-syst-pen-fb} under the assumption 
that {\rm (H1)} holds. Moreover, suppose that $\limsup_{t\rightarrow+\infty}\lambda(t)\beta(t)<2\mu$ and let be $z\in C\cap\dom A$ and $v\in Az$. Then there exist $a,b>0$ and $t_0 >0$ such that 
for almost every $t\geq t_0$ it holds
\begin{align}\label{fej3}
& \frac{d}{dt}\|x(t)-z\|^2+a\left(\|\dot x(t)\|^2+\lambda(t)\beta(t)\langle x(t)-z,Bx(t)\rangle + \lambda(t)\beta(t)\|Bx(t)\|^2\right) \leq \nonumber \\
& \left(b\lambda^2(t)-2\eta\lambda(t)\right)\|Dx(t)-Dz\|^2+2\lambda(t)\langle z-x(t),v+Dz\rangle+b\lambda^2(t)\|Dz+v\|^2.
\end{align}
\end{lemma}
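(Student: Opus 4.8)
The plan is to obtain \eqref{fej3} by working directly from Lemma \ref{l-fej2}; Lemma \ref{l-fej1} is not needed here, since $z\in C=\zer B$ enters only through $Bz=0$, which is already used in the proof of Lemma \ref{l-fej2}. First I would use the hypothesis $\limsup_{t\rightarrow+\infty}\lambda(t)\beta(t)<2\mu$ to fix a constant $M$ with $\limsup_{t\rightarrow+\infty}\lambda(t)\beta(t)<M<2\mu$ and a time $t_0>0$ such that $\lambda(t)\beta(t)\le M$ for every $t\ge t_0$; all the estimates below are then understood for almost every $t\ge t_0$.

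The main manipulation is to rewrite the term $2\lambda(t)\langle z-\dot x(t)-x(t),Dx(t)+v\rangle$ on the right-hand side of \eqref{fej2}. I would decompose $Dx(t)+v=(Dx(t)-Dz)+(Dz+v)$ and $z-\dot x(t)-x(t)=(z-x(t))-\dot x(t)$, which splits this term into four inner products. The $\eta$-cocoercivity of $D$ gives $2\lambda(t)\langle z-x(t),Dx(t)-Dz\rangle\le -2\eta\lambda(t)\|Dx(t)-Dz\|^2$; the term $2\lambda(t)\langle z-x(t),Dz+v\rangle$ is kept untouched, being exactly the inner-product term in \eqref{fej3}; and the two remaining terms $-2\lambda(t)\langle\dot x(t),Dx(t)-Dz\rangle$ and $-2\lambda(t)\langle\dot x(t),Dz+v\rangle$ are estimated by Young's inequality with a parameter $\delta>0$, contributing $\delta\|\dot x(t)\|^2+\frac{1}{\delta}\lambda^2(t)\|Dx(t)-Dz\|^2$ and $\delta\|\dot x(t)\|^2+\frac{1}{\delta}\lambda^2(t)\|Dz+v\|^2$, respectively. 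Plugging this back into \eqref{fej2} and absorbing $2\delta\|\dot x(t)\|^2$ into the left-hand side yields
\begin{align*}
& \frac{d}{dt}\|x(t)-z\|^2+\left(\frac{1+2\varepsilon}{1+\varepsilon}-2\delta\right)\|\dot x(t)\|^2+\frac{2\varepsilon}{1+\varepsilon}\lambda(t)\beta(t)\langle x(t)-z,Bx(t)\rangle\le \\
& \lambda(t)\beta(t)\left((1+\varepsilon)\lambda(t)\beta(t)-\frac{2\mu}{1+\varepsilon}\right)\|Bx(t)\|^2+\left(\frac{1}{\delta}\lambda^2(t)-2\eta\lambda(t)\right)\|Dx(t)-Dz\|^2+ \\
& 2\lambda(t)\langle z-x(t),v+Dz\rangle+\frac{1}{\delta}\lambda^2(t)\|Dz+v\|^2.
\end{align*}

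It then remains to fix the free parameters. I would choose $\varepsilon>0$ small enough that $(1+\varepsilon)^2<2\mu/M$ (possible because $M<2\mu$) and $\delta\in(0,1/2)$ arbitrary, and set $b:=1/\delta$ together with $a:=\min\{\frac{1+2\varepsilon}{1+\varepsilon}-2\delta,\ \frac{2\varepsilon}{1+\varepsilon},\ \frac{2\mu}{1+\varepsilon}-(1+\varepsilon)M\}>0$. With these choices the coefficient of $\|\dot x(t)\|^2$ on the left is $\ge a$; since $\langle x(t)-z,Bx(t)\rangle=\langle x(t)-z,Bx(t)-Bz\rangle\ge\mu\|Bx(t)\|^2\ge 0$, the coefficient of $\lambda(t)\beta(t)\langle x(t)-z,Bx(t)\rangle$ may be lowered from $\frac{2\varepsilon}{1+\varepsilon}$ to $a$; and because $\lambda(t)\beta(t)\le M$ for $t\ge t_0$ we get $(1+\varepsilon)\lambda(t)\beta(t)-\frac{2\mu}{1+\varepsilon}\le(1+\varepsilon)M-\frac{2\mu}{1+\varepsilon}\le -a$, so the $\|Bx(t)\|^2$-term on the right is bounded above by $-a\lambda(t)\beta(t)\|Bx(t)\|^2$ and can be transferred to the left. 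The remaining right-hand side is exactly $(b\lambda^2(t)-2\eta\lambda(t))\|Dx(t)-Dz\|^2+2\lambda(t)\langle z-x(t),v+Dz\rangle+b\lambda^2(t)\|Dz+v\|^2$, which gives \eqref{fej3}. The computation itself is routine; the only delicate point — and the place where the strict inequality $\limsup_{t\rightarrow+\infty}\lambda(t)\beta(t)<2\mu$ is genuinely used — is the consistent choice of $M$, $\varepsilon$ and $\delta$ so that a single constant $a>0$ simultaneously bounds the three relevant left-hand coefficients from below while keeping the right-hand $\|Bx(t)\|^2$-coefficient below $-a\lambda(t)\beta(t)$.
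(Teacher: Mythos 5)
Your proof is correct and follows essentially the same route as the paper: both start from Lemma \ref{l-fej2}, decompose $2\lambda(t)\langle z-\dot x(t)-x(t),Dx(t)+v\rangle$ into the cocoercivity term, the retained term $2\lambda(t)\langle z-x(t),Dz+v\rangle$, and Young-type estimates on the $\dot x(t)$ contributions, and then tune the parameters using $\limsup_{t\to+\infty}\lambda(t)\beta(t)<2\mu$ so that one constant $a$ works for all three left-hand terms (the paper takes $a=\tfrac{\varepsilon_0}{2(1+\varepsilon_0)}$, $b=\tfrac{4(1+\varepsilon_0)}{\varepsilon_0}$; your $\delta$-parametrization and your use of $\langle x(t)-z,Bx(t)\rangle\ge 0$ to lower coefficients are equivalent bookkeeping).
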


\begin{proof} Let us evaluate the second term of right-hand side of the inequality \eqref{fej2}. The cococercivity of $D$ yields for almost every $t \geq 0$
\begin{align*}
& 2\lambda(t)\langle z-\dot x(t)-x(t),Dx(t)+v\rangle=\\
& 2\lambda(t)\langle -\dot x(t),Dx(t)+v\rangle+2\lambda(t)\langle z-x(t),Dx(t)-Dz\rangle +2\lambda(t)\langle z-x(t),v+Dz\rangle \leq \\
& \frac{\varepsilon}{2(1+\varepsilon)}\|\dot x(t)\|^2+\frac{2(1+\varepsilon)}{\varepsilon}\lambda^2(t)\|Dx(t)+v\|^2 -\\
& 2\eta\lambda(t)\|Dx(t)-Dz\|^2+2\lambda(t)\langle z-x(t),v+Dz\rangle \leq\\
& \frac{\varepsilon}{2(1+\varepsilon)}\|\dot x(t)\|^2+\frac{4(1+\varepsilon)}{\varepsilon}\lambda^2(t)\|Dx(t)-Dz\|^2+ \frac{4(1+\varepsilon)}{\varepsilon}\lambda^2(t)\|Dz+v\|^2 - \\
& 2\eta\lambda(t)\|Dx(t)-Dz\|^2+2\lambda(t)\langle z-x(t),v+Dz\rangle.
\end{align*}
Combining this inequality with \eqref{fej2} we obtain for almost every $t \geq 0$
\begin{align*}
& \frac{d}{dt}\|x(t)-z\|^2+\frac{2+3\varepsilon}{2(1+\varepsilon)}\|\dot x(t)\|^2+ \frac{2\varepsilon}{1+\varepsilon}\lambda(t)\beta(t)\langle x(t)-z,Bx(t)\rangle+\frac{\varepsilon}{1+\varepsilon}\lambda(t)\beta(t)\|Bx(t)\|^2 \leq\\
& \lambda(t)\beta(t)\left((1+\varepsilon)\lambda(t)\beta(t)-\frac{2\mu}{1+\varepsilon}+\frac{\varepsilon}{1+\varepsilon}\right)\|Bx(t)\|^2 +\\ 
& \left(\frac{4(1+\varepsilon)}{\varepsilon}\lambda^2(t)-2\eta\lambda(t)\right)\|Dx(t)-Dz\|^2 +\\
& 2\lambda(t)\langle z-x(t), Dz+v\rangle+\frac{4(1+\varepsilon)}{\varepsilon}\lambda^2(t)\|Dz+v\|^2.
\end{align*}
Further, there exist $\alpha$ and $\varepsilon_0>0$ such that $$\limsup_{t\rightarrow+\infty}\lambda(t)\beta(t)<\alpha<2\mu$$ and 
$$(1+\varepsilon_0)\alpha-\frac{2\mu}{1+\varepsilon_0}+\frac{\varepsilon_0}{1+\varepsilon_0}<0.$$ By taking 
$a:=\frac{\varepsilon_0}{2(1+\varepsilon_0)}$ and $b:=\frac{4(1+\varepsilon_0)}{\varepsilon_0}$ the desired conclusion follows.
\end{proof}

\begin{lemma}\label{l-fej4} Consider the setting of Problem \ref{pr-cocoercive-single-val} and the associated dynamical system \eqref{dyn-syst-pen-fb} under the assumption 
that {\rm (H1)} holds. Moreover, suppose that $\limsup_{t\rightarrow+\infty}\lambda(t)\beta(t)<2\mu$ and $\lim\inf_{t\rightarrow+\infty}\lambda(t)=0$ and let be $(z,w)$ $\in\gr(A+D+N_C)$ such that $w=v+p+Dz$, 
where $v\in Az$ and $p\in N_C(z)$. Then there exist $a,b >0$ and $t_1>0$ such that for almost every $t\geq t_1$ it holds
\begin{align}\label{fej4}
& \frac{d}{dt}\|x(t)-z\|^2+a\left(\|\dot x(t)\|^2+\frac{\lambda(t)\beta(t)}{2}\langle x(t)-z,Bx(t)\rangle + \lambda(t)\beta(t)\|Bx(t)\|^2\right) \leq \nonumber\\
& \frac{a\lambda(t)\beta(t)}{2}\left[\sup_{u\in C}\varphi_B\left(u,\frac{4p}{a\beta(t)}\right)-\sigma_C\left(\frac{4p}{a\beta(t)}\right)\right]+
2\lambda(t)\langle z-x(t),w\rangle+b\lambda^2(t)\|Dz+v\|^2.
\end{align}
\end{lemma}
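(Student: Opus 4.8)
The plan is to obtain \eqref{fej4} directly from the estimate \eqref{fej3} of Lemma \ref{l-fej3}, by rewriting the term $2\lambda(t)\langle z-x(t),v+Dz\rangle$ in terms of the pair $(z,w)$ and a Fitzpatrick bound. Fix $(z,w)\in\gr(A+D+N_C)$ with $w=v+p+Dz$, $v\in Az$, $p\in N_C(z)$. Then $z\in C\cap\dom A$ and $v\in Az$, so Lemma \ref{l-fej3} is applicable (its hypothesis $\limsup_{t\to+\infty}\lambda(t)\beta(t)<2\mu$ being part of our assumptions) and supplies constants $a,b>0$ and $t_0>0$ such that \eqref{fej3} holds for almost every $t\ge t_0$.

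First I would rewrite the right-hand side of \eqref{fej3}. From $w=v+p+Dz$ we get $v+Dz=w-p$, hence $2\lambda(t)\langle z-x(t),v+Dz\rangle=2\lambda(t)\langle z-x(t),w\rangle+2\lambda(t)\langle x(t)-z,p\rangle$, so the task reduces to absorbing $2\lambda(t)\langle x(t)-z,p\rangle$. Since $N_C(z)$ is a cone, $q(t):=\frac{4p}{a\beta(t)}\in N_C(z)$, so $\langle z,q(t)\rangle=\sigma_C(q(t))$, and $2\lambda(t)\langle x(t)-z,p\rangle=\frac{a\lambda(t)\beta(t)}{2}\langle x(t)-z,q(t)\rangle$. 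Now I would apply the Fitzpatrick inequality, exactly as in the proof of Lemma \ref{l-fej1} but to the scaled vector $q(t)$ in place of $p/\beta(t)$: since $(x(t),Bx(t))\in\gr B$, the definition of $\varphi_B$ gives $\varphi_B(z,q(t))\ge\langle z,Bx(t)\rangle+\langle x(t),q(t)\rangle-\langle x(t),Bx(t)\rangle$, whence $\langle x(t),q(t)\rangle\le\varphi_B(z,q(t))+\langle x(t)-z,Bx(t)\rangle$; combining this with $z\in C$ (so that $\varphi_B(z,q(t))\le\sup_{u\in C}\varphi_B(u,q(t))$) and with $\langle z,q(t)\rangle=\sigma_C(q(t))$ yields $\langle x(t)-z,q(t)\rangle\le\sup_{u\in C}\varphi_B(u,q(t))-\sigma_C(q(t))+\langle x(t)-z,Bx(t)\rangle$. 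The point of the scaling factor $a/4$ is that, after multiplication by $\frac{a\lambda(t)\beta(t)}{2}$, the surplus term $\frac{a\lambda(t)\beta(t)}{2}\langle x(t)-z,Bx(t)\rangle$ carries exactly half the coefficient of the term $a\lambda(t)\beta(t)\langle x(t)-z,Bx(t)\rangle$ already present on the left of \eqref{fej3}.

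Inserting these identities and this estimate into \eqref{fej3} and moving $\frac{a\lambda(t)\beta(t)}{2}\langle x(t)-z,Bx(t)\rangle$ to the left-hand side changes the coefficient of $\langle x(t)-z,Bx(t)\rangle$ there from $a\lambda(t)\beta(t)$ into $\frac{a\lambda(t)\beta(t)}{2}$ and leaves on the right precisely the Fitzpatrick expression with argument $4p/(a\beta(t))$ and weight $\frac{a\lambda(t)\beta(t)}{2}$, the full term $2\lambda(t)\langle z-x(t),w\rangle$, the term $b\lambda^2(t)\|Dz+v\|^2$, and one residual summand, $(b\lambda^2(t)-2\eta\lambda(t))\|Dx(t)-Dz\|^2$. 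To finish, one discards this residual term: invoking the hypothesis $\liminf_{t\to+\infty}\lambda(t)=0$ together with the continuity of $\lambda$, choose $t_1\ge t_0$ such that $b\lambda^2(t)-2\eta\lambda(t)\le0$ for almost every $t\ge t_1$; then $(b\lambda^2(t)-2\eta\lambda(t))\|Dx(t)-Dz\|^2\le0$ may be dropped from the right-hand side, which produces \eqref{fej4} for almost every $t\ge t_1$.

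I expect the only delicate bookkeeping to be the calibration of the Fitzpatrick step, namely the choice of the argument $4p/(a\beta(t))$ and of the weight $a/2$ so that exactly half of the $\langle x(t)-z,Bx(t)\rangle$ mass on the left of \eqref{fej3} is consumed; everything else is a routine rearrangement of \eqref{fej3} and the elimination of a sign-definite summand. The one spot where the asymptotic assumptions must be handled with care is precisely this elimination: one needs $b\lambda^2(t)-2\eta\lambda(t)\le0$ near $+\infty$, and it is the hypothesis on the behaviour of $\lambda$ at infinity that secures it.
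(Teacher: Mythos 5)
Your proof is correct and follows essentially the same route as the paper's: invoke Lemma \ref{l-fej3}, rewrite $v+Dz=w-p$, rescale $p$ to $4p/(a\beta(t))$ so that the Fitzpatrick estimate absorbs exactly half of the $a\lambda(t)\beta(t)\langle x(t)-z,Bx(t)\rangle$ term on the left, and discard the $\left(b\lambda^2(t)-2\eta\lambda(t)\right)\|Dx(t)-Dz\|^2$ summand for large $t$. Your justification of this last step via $\liminf_{t\rightarrow+\infty}\lambda(t)=0$ is exactly the paper's own (and inherits the same slight imprecision, since a $\liminf$ condition by itself only guarantees $\lambda(t)\leq 2\eta/b$ along a subsequence rather than eventually), so there is nothing to change relative to the published argument.
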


\begin{proof}
According to Lemma \ref{l-fej3}, there exist $a,b >0$ and $t_0 >0$ such that for almost every  $t \geq t_0$ the inequality \eqref{fej3} holds. 
Since $\lim\inf_{t\rightarrow\infty}\lambda(t)=0$, there exists $t_1 \geq t_0$ such that $b\lambda^2(t)-2\eta\lambda(t)\leq 0$ for every $t\geq t_1$, 
hence for almost every $t \geq t_1$
\begin{align*}
& \frac{d}{dt}\|x(t)-z\|^2+a\left(\|\dot x(t)\|^2+\lambda(t)\beta(t)\langle x(t)-z,Bx(t)\rangle + \lambda(t)\beta(t)\|Bx(t)\|^2\right) \leq \\
&  2\lambda(t)\langle z-x(t),v+Dz\rangle+b\lambda^2(t)\|Du+v\|^2.
\end{align*}
The conclusion follows by combining this inequality with the following one, which holds for almost every $t \geq 0$
\begin{align*}
& 2\lambda(t)\langle z-x(t),v+Dz\rangle+\frac{a\lambda(t)\beta(t)}{2}\langle z-x(t),Bx(t)\rangle = \\
& 2\lambda(t)\langle z-x(t),-p\rangle+\frac{a\lambda(t)\beta(t)}{2}\langle z-x(t),Bx(t)\rangle+2\lambda(t)\langle z-x(t),w\rangle = \\
& \frac{a\lambda(t)\beta(t)}{2}\left(\langle z,Bx(t)\rangle +\left\langle x(t),\frac{4p}{a\beta(t)}\right\rangle-\langle x(t),Bx(t)\rangle-
\left\langle z,\frac{4p}{a\beta(t)}\right\rangle\right)+2\lambda(t)\langle z-x(t),w\rangle \leq \\
& \frac{a\lambda(t)\beta(t)}{2}\left[\sup_{u\in C}\varphi_B\left(u,\frac{4p}{a\beta(t)}\right)-
\sigma_C\left(\frac{4p}{a\beta(t)}\right)\right]+2\lambda(t)\langle z-x(t),w\rangle.
\end{align*}
\end{proof}
For proving the convergence statement for the trajectories generated by the dynamical system \eqref{dyn-syst-pen-fb} we will make use of the following ergodic version of the continuous Opial Lemma. 
The proof of this results follows similarly to the one of \cite[Lemma 2.3]{att-cza-10} and therefore we omit it. 

\begin{lemma}\label{opial} Let $S \subseteq {\cal H}$ be a nonempty set, $x:[0,+\infty)\rightarrow{\cal H}$ a given map and 
$\lambda:[0,+\infty)\rightarrow(0,+\infty)$ such that $\int_0^{+\infty}\lambda(t)=+\infty$. Define $\tilde x :[0, +\infty)\rightarrow {\cal H}$ by 
$$\tilde x(t)=\frac{1}{\int_0^t\lambda(s)ds}\int_0^t\lambda(s)x(s)ds.$$
Assume that 

(i) for every $z\in S$, $\lim_{t\rightarrow+\infty}\|x(t)-z\|$ exists; 

(ii) every weak sequential cluster point of the map $\tilde x$ belongs to $S$. 

\noindent Then there exists $x_{\infty}\in S$ such that $w-\lim_{t\rightarrow+\infty}\tilde x(t)=x_{\infty}$. 
\end{lemma}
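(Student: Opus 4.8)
The plan is to argue by the classical three-step scheme for Opial-type lemmas, adapted to the ergodic mean. Write $\Lambda(t)=\int_0^t\lambda(s)\,ds$, so that $\tilde x(t)-z=\frac{1}{\Lambda(t)}\int_0^t\lambda(s)(x(s)-z)\,ds$ for every $z\in{\cal H}$ and every $t>0$. First I would check that $\tilde x$ is bounded: fixing $z\in S$, assumption (i) makes $s\mapsto\|x(s)-z\|$ convergent at $+\infty$, hence bounded on $[0,+\infty)$ by some $K>0$, and then $\|\tilde x(t)-z\|\le\frac{1}{\Lambda(t)}\int_0^t\lambda(s)\|x(s)-z\|\,ds\le K$ for all $t>0$ (this is just $\|\int\cdot\|\le\int\|\cdot\|$ for the probability measure $\lambda(s)\,ds/\Lambda(t)$ on $[0,t]$). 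In particular, along any sequence $t_n\to+\infty$ the bounded sequence $(\tilde x(t_n))_n$ of the Hilbert space ${\cal H}$ has a weakly convergent subsequence, so $\tilde x$ does possess weak sequential cluster points, and by (ii) every such cluster point lies in $S$.

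The heart of the proof is to show that for every $z_1,z_2\in S$ the limit $\lim_{t\to+\infty}\langle\tilde x(t),z_1-z_2\rangle$ exists. Here the ergodic setting genuinely departs from the non-ergodic Opial lemma: assumption (i) does not by itself make $\lim_{t\to+\infty}\|\tilde x(t)-z\|$ exist, so it cannot simply be transferred from $x$ to $\tilde x$. Instead, the polarization identity gives $\langle x(s),z_1-z_2\rangle=\frac12\big(\|x(s)-z_2\|^2-\|x(s)-z_1\|^2+\|z_1\|^2-\|z_2\|^2\big)$, which by (i) converges, as $s\to+\infty$, to some finite $\gamma\in\R$. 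Since $\int_0^{+\infty}\lambda=+\infty$, a routine Ces\`aro averaging argument — splitting $\int_0^t=\int_0^T+\int_T^t$, the first part being $O(1/\Lambda(t))\to0$ and the second controlled by $\sup_{s\ge T}|\langle x(s),z_1-z_2\rangle-\gamma|$ — yields $\langle\tilde x(t),z_1-z_2\rangle=\frac{1}{\Lambda(t)}\int_0^t\lambda(s)\langle x(s),z_1-z_2\rangle\,ds\to\gamma$.

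To finish, let $x_1,x_2$ be two weak sequential cluster points of $\tilde x$, say $\tilde x(t_n)\rightharpoonup x_1$ and $\tilde x(s_n)\rightharpoonup x_2$ with $t_n,s_n\to+\infty$; by the first step $x_1,x_2\in S$. Applying the previous step with $z_1=x_1$ and $z_2=x_2$, and passing to the limit along the two sequences, gives $\langle x_1,x_1-x_2\rangle=\gamma=\langle x_2,x_1-x_2\rangle$, hence $\|x_1-x_2\|^2=0$, i.e. $x_1=x_2$. Thus $\tilde x$ has a unique weak sequential cluster point $x_\infty\in S$; combined with the boundedness of $\tilde x$ this forces $\tilde x(t)$ to converge weakly to $x_\infty$ as $t\to+\infty$ (if not, some sequence $t_n\to+\infty$ would stay outside a fixed weak neighbourhood of $x_\infty$, yet it would have a weakly convergent subsequence whose limit, being a cluster point, must equal $x_\infty$ — a contradiction).

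I do not expect a genuine technical obstacle; the one point that needs care is the conceptual shift highlighted above — working with the averaged inner products $\langle\tilde x(t),z_1-z_2\rangle$, whose convergence follows from polarization together with Ces\`aro averaging, rather than with the averaged distances $\|\tilde x(t)-z\|$, which need not converge for the ergodic mean even when (i) holds.
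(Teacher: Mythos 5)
Your proof is correct and is exactly the standard argument: the paper itself omits the proof, deferring to \cite[Lemma 2.3]{att-cza-10}, and the proof given there proceeds along the same three steps you use (boundedness of the averages, existence of $\lim_{t\to+\infty}\langle \tilde x(t), z_1-z_2\rangle$ via polarization and Ces\`aro averaging, then uniqueness of the weak sequential cluster point). The only point worth a remark is that boundedness of $s\mapsto\|x(s)-z\|$ on all of $[0,+\infty)$ does not follow from (i) alone but from (i) together with the local integrability of $\lambda(\cdot)x(\cdot)$ implicit in the definition of $\tilde x$; this is a cosmetic adjustment, not a gap.
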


We will prove the convergence results under the following hypotheses, which can be seen as continuous counterparts of the conditions considered in 
\cite{b-c-penalty-svva} in the discrete case (see also \cite{att-cza-peyp-c, att-cza-10}): 
\begin{enumerate}
\item[{\rm (H2)}] $A+N_C$ is maximally monotone and $\zer(A+D+N_C)\neq\emptyset$;

\item[{\rm (H3)}] $\lambda(\cdot)\in L^2([0,+\infty))\setminus L^1([0,+\infty))$; 

\item[$(H_{fitz})$] For every $p\in\ran N_C$, $\int_0^{+\infty} 
\lambda(t)\beta(t)\left[\sup\limits_{u \in C}\varphi_B\left(u,\frac{p}{\beta(t)}\right)-\sigma_C\left(\frac{p}{\beta(t)}\right)\right]dt<+\infty$.
\end{enumerate}

\begin{remark}\label{h} 
\begin{enumerate}
\item[(a)] Since $A$ is maximally monotone and $C$ is a nonempty, convex and closed set, $A + N_C$ is maximally monotone, provided that a so-called regularity condition is fulfilled. We refer the reader to
\cite{bauschke-book, b-hab, BCW-set-val, bo-van, borw-06, simons, Zal-carte} for conditions guaranteeing the maximal monotonicity of the 
sum of two maximally monotone operators. Further, as $D$ is maximally monotone (see \cite[Example 20.28]{bauschke-book}) and $\dom D={\cal H}$, {\rm (H2)} guarantees 
that $A+D+N_C$ is maximally monotone, too (see \cite[Corollary 24.4]{bauschke-book}). 

\item[(b)] The condition (H3) is obviously satisfied for the function $\lambda(t)=\frac{1}{t+1}$. 

\item[(c)] Let us turn now our attention to $(H_{fitz})$. The discrete version of this condition has been considered for the first time in \cite{b-c-penalty-svva}. We notice that for each $p\in\ran N_C$ we have
$$\sup\limits_{u \in C}\varphi_B\left(u,\frac{p}{\beta(t)}\right)-\sigma_C\left(\frac{p}{\beta(t)}\right)\geq 0 \ \forall t\geq 0.$$
Indeed, if $p\in\ran N_C$, then there exists $\ol u \in C$ such that $p\in N_C(\ol u)$. This implies that
$$\sup\limits_{u \in C}\varphi_B\left(u,\frac{p}{\beta(t)}\right)-\sigma_C\left(\frac{p}{\beta(t)}\right)
\geq \left\langle \ol u,\frac{p}{\beta_n}\right\rangle-\sigma_C\left(\frac{p}{\beta_n}\right)=0 \ \forall t\geq 0.$$
Let us consider the particular case $B=\nabla \Psi$, where $\Psi:{\cal H}\rightarrow\R$ is a convex and differentiable function with Lipschitz 
continuous gradient and satisfies $\min\Psi=0$. In this case $C=\argmin\Psi$, $\Psi(x)=0$ for $x\in C$ and it holds (see \cite{bausch-m-s})
\begin{equation}\label{fitzp-subdiff-ineq}
 \varphi_{\nabla\Psi }(x,u)\leq \Psi(x) +\Psi^*(u) \ \forall (x,u)\in {\cal H}\times {\cal H},
\end{equation}
where $\Psi^*:{\cal H}\rightarrow\R\cup\{+\infty\}$, $\Psi^*(u)=\sup_{x\in {\cal H}}\{\langle u,x\rangle-\Psi(x)\}$, is the Fenchel conjugate of $\Psi$. 

This means that $(H_{fitz})$ is in this particular case fulfilled, if one has: 

$(H)$ For every $p\in\ran N_C$, $\int_0^{+\infty} \lambda(t)\beta(t)\left[\Psi^*\left(\frac{p}{\beta(t)}\right)-\sigma_C\left(\frac{p}{\beta(t)}\right)\right]dt<+\infty$. 

Let us mention that $(H)$ is the continuous counterpart of a condition used in \cite{att-cza-peyp-c} in the context of proving convergence for penalty-type iterative schemes. 
It has its origins in the work \cite{att-cza-10}, where a similar condition has been used in the convergence analysis of a coupled dynamical systems with multiscale aspects. 

Let us present a particular setting in which $(H)$ and, consequently, $(H_{fitz})$ are fulfilled. This example is inspired 
by \cite[Section 1.3(b)]{att-cza-10}. Take $\Psi:{\cal H}\rightarrow\R$, 
$\Psi(x)=\frac{1}{2}d^2(x,C)=\frac{1}{2}\inf_{y\in C}\|x-y\|^2$. For its conjugate function one gets $\Psi^*(x)=\frac{1}{2}\|x\|^2+\sigma_C(x) \ \forall x \in {\cal H}$, 
hence $(H)$ reduces to $$\int_0^{+\infty}\frac{\lambda(t)}{\beta(t)}dt<+\infty,$$
which is obviously fulfilled for $\lambda(t)=\frac{1}{t+1}$ and $\beta(t)=1+t$. For other particular instances where $(H_{fitz})$ (in its continuous or discrete version) holds we refer the reader to 
\cite{att-cza-10, att-cza-peyp-c, att-cza-peyp-p, banert-bot-pen, peyp-12, noun-peyp}. 
\end{enumerate}
\end{remark}

Let us state now the main result concerning the convergence of the trajectories generated by the dynamical system \eqref{dyn-syst-pen-fb}. 

\begin{theorem}\label{conv-dyn-pen} 
Consider the setting of Problem \ref{pr-cocoercive-single-val} and the associated dynamical system \eqref{dyn-syst-pen-fb}. Assume that 
$\limsup_{t\rightarrow+\infty}\lambda(t)\beta(t)<2\mu$ and that {\rm (H1)-(H3)} and $(H_{fitz})$ hold. 
Let $\tilde x:[0,+\infty)\rightarrow {\cal H}$ be defined by $$\tilde x(t)=\frac{1}{\int_0^t\lambda(s)ds}\int_0^t\lambda(s)x(s)ds.$$
Then the following statements are true:
\begin{itemize}
\item[(i)] for every $z\in\zer(A+D+N_C)$, $\|x(t)-z\|$ converges as $t\rightarrow+\infty$; moreover, $\int_0^{+\infty}\|\dot x(t)\|^2 dt$ $<+\infty$, $\int_0^{+\infty}\lambda(t)\beta(t)\langle Bx(t),x(t)-z\rangle dt \!<+\infty$ 
and $\!\int_0^{+\infty}\lambda(t)\beta(t)\|B x(t)\|^2 dt$ $< +\infty$; 

\item[(ii)] $\tilde x(t)$ converges weakly to an element in $\zer(A+D+N_C)$ as $t\rightarrow+\infty$;

\item[(iii)] if, additionally, $A$ is strongly monotone, then $x(t)$ converges strongly to the unique element in 
$\zer(A+D+N_C)$ as $t\rightarrow+\infty$.
\end{itemize}
\end{theorem}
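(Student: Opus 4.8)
The plan is to establish items (i)--(iii) in turn, using the Lyapunov estimates of Lemmas~\ref{l-fej1}--\ref{l-fej4} for the first and third and the ergodic Opial Lemma~\ref{opial} for the second.

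\emph{Step 1 (item (i)).} Fix $z\in\zer(A+D+N_C)$; then $z\in C\cap\dom A$ and there are $v\in Az$, $p\in N_C(z)$ with $v+Dz+p=0$, i.e. $w=0$ in the notation of Lemma~\ref{l-fej4}. Since an $L^2$ function cannot stay bounded away from zero near $+\infty$, (H3) gives $\liminf_{t\to+\infty}\lambda(t)=0$, so Lemmas~\ref{l-fej3}--\ref{l-fej4} are applicable. Lemma~\ref{l-fej4} yields, for a.e. $t\ge t_1$,
$$\frac{d}{dt}\|x(t)-z\|^2+a\Big(\|\dot x(t)\|^2+\tfrac{\lambda(t)\beta(t)}{2}\langle x(t)-z,Bx(t)\rangle+\lambda(t)\beta(t)\|Bx(t)\|^2\Big)\le g(t),$$
where $g(t)=\tfrac{a\lambda(t)\beta(t)}{2}\big[\sup_{u\in C}\varphi_B(u,\tfrac{4p}{a\beta(t)})-\sigma_C(\tfrac{4p}{a\beta(t)})\big]+b\lambda^2(t)\|Dz+v\|^2$. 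The key point is that $g\in L^1([t_1,+\infty))$: the first summand by $(H_{fitz})$ applied to $\tfrac{4p}{a}\in\ran N_C$, the second by (H3). Moreover $\mu$-cocoercivity of $B$ and $Bz=0$ give $\langle x(t)-z,Bx(t)\rangle\ge\mu\|Bx(t)\|^2\ge0$, so all $a$-weighted terms on the left are nonnegative; hence $\tfrac{d}{dt}\|x(t)-z\|^2\le g(t)$, and a standard argument ($t\mapsto\|x(t)-z\|^2-\int_{t_1}^t g$ is nonincreasing and bounded below) shows $\|x(t)-z\|$ converges; the interval $[0,t_1]$ is harmless because $x$ is continuous, hence bounded, there. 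In particular $x$ is bounded, and integrating the displayed inequality over $[t_1,+\infty)$ while discarding the nonnegative boundary term $\|x(t)-z\|^2$ produces the three integrability claims (the contributions over $[0,t_1]$ being finite since the integrands are bounded there).

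\emph{Step 2 (item (ii)).} I would apply Lemma~\ref{opial} with $S=\zer(A+D+N_C)$: its hypothesis (i) is Step~1 and $\int_0^{+\infty}\lambda=+\infty$ is (H3). For its hypothesis (ii), note that by (H2) and Remark~\ref{h}(a) the operator $A+D+N_C$ is maximally monotone, so by \eqref{charact-zeros-max} it suffices to show that any weak sequential cluster point $\bar x$ of $\tilde x$ satisfies $\langle z-\bar x,w\rangle\ge0$ for every $(z,w)\in\gr(A+D+N_C)$. Fix such a pair, write $w=v+p+Dz$ with $v\in Az$, $p\in N_C(z)$, drop the nonnegative left-hand terms in Lemma~\ref{l-fej4} and integrate over $[0,t]$; using $g\in L^1$, the nonnegativity of $\|x(t)-z\|^2$, and boundedness of the integrand on $[0,t_1]$, one obtains a constant $M$ with $\int_0^t\lambda(s)\langle x(s)-z,w\rangle\,ds\le M$ for all $t$. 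Dividing by $\int_0^t\lambda$ and using linearity of the ergodic mean, $\langle\tilde x(t)-z,w\rangle\le M\big(\int_0^t\lambda(s)\,ds\big)^{-1}\to0$. Since $x$, hence $\tilde x$, is bounded, weak cluster points exist, and passing to the limit along $\tilde x(t_n)\rightharpoonup\bar x$ gives $\langle z-\bar x,w\rangle\ge0$, i.e. $\bar x\in S$; Lemma~\ref{opial} then gives weak convergence of $\tilde x$ to an element of $\zer(A+D+N_C)$.

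\emph{Step 3 (item (iii)).} If $A$ is $\gamma$-strongly monotone, then $A+D+N_C$ is strongly monotone, so by (H2) it has a unique zero $z^*$, and by Step~1 $\|x(t)-z^*\|\to\ell$ for some $\ell\ge0$; I must show $\ell=0$. Rerunning Lemmas~\ref{l-fej1}--\ref{l-fej4}, the only change is that combining the inclusion $-\tfrac1{\lambda(t)}\dot x(t)-Dx(t)-\beta(t)Bx(t)\in A(\dot x(t)+x(t))$ with $\gamma$-strong monotonicity of $A$ (instead of mere monotonicity) inserts an extra term $+2\gamma\lambda(t)\|\dot x(t)+x(t)-z^*\|^2$ on the left of every estimate, so Lemma~\ref{l-fej4} with $z=z^*$ reads, for a.e. $t\ge t_1$,
$$\frac{d}{dt}\|x(t)-z^*\|^2+a\|\dot x(t)\|^2+2\gamma\lambda(t)\|\dot x(t)+x(t)-z^*\|^2+(\text{nonneg.})\le g(t),$$
with $g\in L^1$ as in Step~1. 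On $F:=\{t\ge t_1:\lambda(t)\le a/(4\gamma)\}$ the elementary bound $2\gamma\lambda\|\dot x+x-z^*\|^2\ge\gamma\lambda\|x-z^*\|^2-2\gamma\lambda\|\dot x\|^2$ leaves the coefficient of $\|\dot x(t)\|^2$ nonnegative, so $\tfrac{d}{dt}\|x(t)-z^*\|^2+\gamma\lambda(t)\|x(t)-z^*\|^2\le g(t)$ on $F$ and only $\tfrac{d}{dt}\|x(t)-z^*\|^2\le g(t)$ off $F$; integrating over $[t_1,+\infty)$ gives $\int_F\lambda(t)\|x(t)-z^*\|^2\,dt<+\infty$. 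If $\ell>0$, then $\|x(t)-z^*\|^2\ge\ell^2/2$ for large $t$, whence $\int_F\lambda<+\infty$; but on $[t_1,+\infty)\setminus F$ one has $\lambda<\tfrac{4\gamma}{a}\lambda^2$, so (H3) forces $\int_{[t_1,+\infty)\setminus F}\lambda<+\infty$, giving $\int_{t_1}^{+\infty}\lambda<+\infty$ and contradicting (H3). Hence $\ell=0$ and $x(t)\to z^*$ strongly. The routine work is the Lyapunov bookkeeping already packaged in Lemmas~\ref{l-fej1}--\ref{l-fej4}; the main obstacle is precisely this last step, where the strong-monotonicity bonus is expressed through $\|\dot x+x-z^*\|^2$ rather than $\|x-z^*\|^2$, and converting it into a genuine dissipation $\gamma\lambda\|x-z^*\|^2$ needs both the Young-type splitting above and — since $\lambda$ need not be bounded — the observation that $\lambda\in L^2$ confines the set where $\lambda$ is large, which is what rescues the contradiction argument. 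In Step~2 the one substantive point is the uniform-in-$t$ bound on $\int_0^t\lambda\langle x-z,w\rangle$.
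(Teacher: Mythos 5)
Your proof is correct and follows essentially the same route as the paper: item (i) via Lemma~\ref{l-fej4} with $w=0$ and an $L^1$ right-hand side, item (ii) via the ergodic Opial Lemma~\ref{opial} together with the characterization \eqref{charact-zeros-max}, and item (iii) by rerunning the Lyapunov estimates with the extra strong-monotonicity term $2\gamma\lambda(t)\|\dot x(t)+x(t)-z\|^2$. Two minor deviations are worth noting, both harmless and arguably cleaner: in (ii) you integrate the inequality of Lemma~\ref{l-fej4} rather than that of Lemma~\ref{l-fej1}, which spares you from having to bound $\int_0^{+\infty}\lambda^2(t)\beta^2(t)\|Bx(t)\|^2\,dt$ separately (the paper does this using statement (i) and $\limsup_{t\to+\infty}\lambda(t)\beta(t)<2\mu$); and in (iii) your splitting into $F=\{t:\lambda(t)\le a/(4\gamma)\}$ and its complement, using $\lambda\in L^2$ to control the set where $\lambda$ is large, handles a possible unboundedness of $\lambda$ that the paper's argument passes over by tacitly taking $\lambda$ small beyond some $t_2$.
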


\begin{proof} (i) According to {\rm (H3)}, the function $\lambda(\cdot)$ satisfies the relation $\lim\inf_{t\rightarrow\infty}\lambda(t)=0$. Take $z\in\zer(A+D+N_C)$ and
$v\in Az$ and $p\in N_C(z)$ fulfilling $0=v+p+Dz$. Applying Lemma \ref{l-fej4} for $w=0$, it follows that there exist $a,b >0$ and $t_1>0$ such that for almost every $t\geq t_1$ it holds
\begin{align*}
& \frac{d}{dt}\|x(t)-z\|^2+a\left(\|\dot x(t)\|^2+\frac{\lambda(t)\beta(t)}{2}\langle x(t)-z,Bx(t)\rangle + \lambda(t)\beta(t)\|Bx(t)\|^2\right) \leq \\
& \frac{a\lambda(t)\beta(t)}{2}\left[\sup_{u\in C}\varphi_B\left(u,\frac{4p}{a\beta(t)}\right)-\sigma_C\left(\frac{4p}{a\beta(t)}\right)\right]+ b\lambda^2(t)\|Dz+v\|^2.
\end{align*}
Since the function (having as argument $t$) on the right-hand side of the above inequality belongs to $L^1([0,+\infty))$, by using also \cite[Lemma 5.1]{abbas-att-sv}, the statements follow.

(ii) According to Lemma \ref{opial}, it is enough to show that every weak sequential cluster limit of $\tilde x$ belongs to $\zer(A+D+N_C)$. Let 
$\ol x$ be such a weak sequential cluster limit, that is, there exists a sequence $(t_n)_{n \geq 0} \rightarrow+\infty$ such that $\tilde x(t_n)$ weakly converges to 
$\ol x$ as $n\rightarrow+\infty$. 

Take an arbitrary $(z,w)\in\gr(A+D+N_C)$ such that $w=v+p+Dz$, where $v\in Az$ and $p\in N_C(z)$. From Lemma \ref{l-fej1} and by using that $\lim\inf_{t\rightarrow+\infty}\lambda(t)=0$ it follows that there exists 
$t_2>0$ such that for almost every $t\geq t_2$ we have 
\begin{align*}
\frac{d}{dt}\|x(t)-z\|^2 \leq & \ 2\lambda(t)\beta(t)\left[\sup_{u\in C}\varphi_B\left(u,\frac{p}{\beta(t)}\right)-\sigma_C\left(\frac{p}{\beta(t)}\right)\right]\nonumber \\
& \ +3\lambda^2(t)\beta^2(t)\|Bx(t)\|^2+3\lambda^2(t)\|Dz+v\|^2+2\lambda(t)\langle z-x(t),w\rangle.
\end{align*}
By integrating from $t_2$ to $T$, where $T\geq t_2$, we obtain 
\begin{equation}\label{conv-ineq1}\|x(T)-z\|^2-\|x(t_2)-z\|^2\leq L+
2\left\langle\left(\int_{t_2}^T\lambda(t)dt\right)z-\int_{t_2}^T\lambda(t)x(t)dt,w\right\rangle,\end{equation}
where, according to the hypotheses and statement (i), 
\begin{align*}
L:= & \ 2\int_0^{+\infty} \lambda(t)\beta(t)\left[\sup\limits_{u \in C}\varphi_B\left(u,\frac{p}{\beta(t)}\right)-\sigma_C\left(\frac{p}{\beta(t)}\right)\right]dt\\
& \ +3\int_0^{+\infty}\lambda^2(t)\beta^2(t)\|Bx(t)\|^2dt+3\|Dz+v\|^2\int_0^{+\infty}\lambda^2(t)dt<+\infty.
\end{align*}
Now dividing \eqref{conv-ineq1} by $\int_0^T\lambda(t)dt$ and discarding the nonnegative term $\|x(T)-z\|^2$, we obtain 
\begin{equation}\label{conv-ineq2}\frac{-\|x(t_2)-z\|^2}{\int_0^T\lambda(t)dt}\leq \frac{L'}{\int_0^T\lambda(t)dt}+
2\left\langle z-\frac{1}{\int_0^T\lambda(t)dt}\int_0^T\lambda(t)x(t)dt, w\right\rangle, \end{equation}
where 
$$L':=L+2\left\langle\left(-\int_0^{t_2}\lambda(t)dt\right)z+\int_0^{t_2}\lambda(t)x(t)dt,w\right\rangle<+\infty.$$
Letting $T:=t_n$ in \eqref{conv-ineq2} for any $n \geq 0$, passing to $n\rightarrow+\infty$ and using (H3) and the definition of $\tilde x$, it follows
$$0\leq 2\langle z-\ol x, w\rangle.$$ 
Since  $(z,w)$ was taken arbitrary in $\gr(A+D+N_C)$, we obtain from \eqref{charact-zeros-max}  that $\ol x\in\zer(A+D+N_C)$ and from here the conclusion follows. 

(iii) Suppose that $A$ is $\gamma$-strongly monotone, where $\gamma>0$. Let $z$ be the unique element in $\zer(A+D+N_C)$ and $v\in Az$ and $p\in N_C(z)$ such that $0=v+p+Dz$.  Following the lines of the proof of Lemma \ref{l-fej1}, one 
can prove that for almost every $t \geq 0$
\begin{align*}\label{fej1-str1}
& 2\gamma\lambda(t)\|\dot x(t)+x(t)-z\|^2+\frac{d}{dt}\|x(t)-z\|^2+\lambda(t)(2\eta-3\lambda(t))\|Dx(t)-Dz\|^2 \leq \nonumber\\
& 2\lambda(t)\beta(t)\left[\sup_{u\in C}\varphi_B\left(u,\frac{p}{\beta(t)}\right)-\sigma_C\left(\frac{p}{\beta(t)}\right)\right] +3\lambda^2(t)\beta^2(t)\|Bx(t)\|^2+3\lambda^2(t)\|Dz+v\|^2.
\end{align*}

Since $\liminf_{t\rightarrow+\infty}\lambda(t)=0$, it follows that there exists $t_2>0$ such that for almost every $t\geq t_2$
\begin{align*}
& 2\gamma\lambda(t)\|\dot x(t)+x(t)-z\|^2+\frac{d}{dt}\|x(t)-z\|^2 \leq \\
& 2\lambda(t)\beta(t)\left[\sup_{u\in C}\varphi_B\left(u,\frac{p}{\beta(t)}\right)-\sigma_C\left(\frac{p}{\beta(t)}\right)\right] +3\lambda^2(t)\beta^2(t)\|Bx(t)\|^2+3\lambda^2(t)\|Dz+v\|^2,
\end{align*}
thus
\begin{align*}
& \gamma\lambda(t)\|x(t)-z\|^2+\frac{d}{dt}\|x(t)-z\|^2 \leq 2\gamma\lambda(t)\|\dot x(t)\|^2+\\ & 
2\lambda(t)\beta(t)\left[\sup_{u\in C}\varphi_B\left(u,\frac{p}{\beta(t)}\right)-\sigma_C\left(\frac{p}{\beta(t)}\right)\right]
 +3\lambda^2(t)\beta^2(t)\|Bx(t)\|^2+3\lambda^2(t)\|Dz+v\|^2.
\end{align*}
By using the hypotheses and statement (i), after integration of the last inequality one obtains 
$$\int_0^{+\infty}\lambda(t)\|x(t)-z\|^2dt<+\infty.$$
Using the convergence of $\|x(t)-z\|$ as $t\rightarrow+\infty$ and (H3) it follows that $x(t)$ must converge to $z$ as $t\rightarrow+\infty$. 
\end{proof}


\begin{thebibliography}{99}

\bibitem{abbas-att-arx14} B. Abbas, H. Attouch, {\it Dynamical systems and forward-backward algorithms associated 
with the sum of a convex subdifferential and a monotone cocoercive operator}, Optimization, DOI: 10.1080/02331934.2014.971412, 2014

\bibitem{abbas-att-sv} B. Abbas, H. Attouch, B.F. Svaiter, {\it Newton-like dynamics and forward-backward methods for 
structured monotone inclusions in Hilbert spaces}, Journal of Optimization Theory and its Applications 161(2), 331--360, 2014

\bibitem{antipin} A.S. Antipin, {\it Minimization of convex functions on convex sets by means of differential equations}, 
(Russian) Differentsial'nye Uravneniya 30(9), 1475--1486, 1994; translation in Differential Equations 30(9), 1365--1375, 1994

\bibitem{att-cza-10} H. Attouch, M.-O. Czarnecki, {\it Asymptotic behavior of coupled dynamical systems with multiscale aspects}, 
Journal of Differential Equations 248(6), 1315-1344, 2010

\bibitem{att-cza-peyp-p} H. Attouch, M.-O. Czarnecki, J. Peypouquet, {\it Prox-penalization and splitting methods for 
constrained variational problems}, SIAM Journal on Optimization 21(1), 149-173, 2011

\bibitem{att-cza-peyp-c} H. Attouch, M.-O. Czarnecki, J. Peypouquet, {\it Coupling forward-backward with penalty 
schemes and parallel splitting for constrained variational inequalities}, SIAM Journal on Optimization 21(4), 1251-1274, 2011

\bibitem{att-sv2011} H. Attouch, B.F. Svaiter, {\it A continuous dynamical Newton-like approach to solving monotone inclusions}, 
SIAM Journal on Control and Optimization 49(2), 574--598, 2011

\bibitem{baillon-brezis1976} J.B. Baillon, H. Brezis, {\it Une remarque sur le comportement asymptotique des semigroupes non 
lin\'{e}aires}, Houston Journal of Mathematics 2(1), 5--7, 1976

\bibitem{banert-bot-pen} S. Banert, R.I. Bo\c t, {\it Backward penalty schemes for monotone inclusion problems}, 
Journal of Optimization Theory and Applications, DOI:10.1007/s10957-014-0700-x

\bibitem{bauschke-book} H.H. Bauschke, P.L. Combettes, {\it Convex Analysis and Monotone Operator Theory in Hilbert Spaces}, CMS Books in Mathematics, Springer, New York, 2011

\bibitem{bausch-m-s} H.H. Bauschke, D.A. McLaren, H.S. Sendov, {\it Fitzpatrick functions: inequalities, examples and 
remarks on a problem by S. Fitzpatrick}, Journal of Convex Analysis 13(3-4), 499--523, 2006

\bibitem{bolte-2003} J. Bolte, {\it Continuous gradient projection method in Hilbert spaces}, Journal of Optimization Theory and its 
Applications 119(2), 235--259, 2003

\bibitem{borw-06} J.M. Borwein, {\it Maximal monotonicity via
convex analysis}, Journal of Convex Analysis 13(3-4), 561--586, 2006

\bibitem{bo-van} J.M. Borwein, J.D. Vanderwerff, {\it Convex Functions: Constructions, Characterizations
and Counterexamples}, Cambridge University Press, Cambridge, 2010

\bibitem{b-hab} R.I. Bo\c t, {\it Conjugate Duality in Convex Optimization}, Lecture Notes in Economics and Mathematical Systems, 
Vol. 637, Springer, Berlin Heidelberg, 2010

\bibitem{BCW-set-val} R.I. Bo\c t, E.R. Csetnek, {\it An application of
the bivariate inf-convolution formula to enlargements of monotone
operators}, Set-Valued Analysis 16(7-8), 983--997, 2008

\bibitem{b-c-dyn-KM} R.I. Bo\c t, E.R. Csetnek, {\it A dynamical system associated with the fixed points set of a 
nonexpansive operator}, to appear in Journal of Dynamics and Differential Equations, arXiv:1411.4442v2

\bibitem{b-c-penalty-svva} R.I. Bo\c t, E.R. Csetnek, {\it Forward-backward and Tseng's type penalty schemes for monotone inclusion problems}, 
Set-Valued and Variational Analysis 22, 313--331, 2014

\bibitem{b-c-penalty-vjm} R.I. Bo\c t, E.R. Csetnek, {\it A Tseng's type penalty scheme for solving inclusion problems involving 
linearly composed and parallel-sum type monotone operators}, Vietnam Journal of Mathematics 42(4), 451--465, 2014

\bibitem{brezis} H. Br\'{e}zis, {\it Op\'{e}rateurs maximaux monotones et semi-groupes de contractions dans les espaces de Hilbert}, 
North-Holland Mathematics Studies No. 5, Notas de Matem\'{a}tica (50), North-Holland/Elsevier, New York, 1973

 \bibitem{bruck} R.E. Bruck, Jr., {\it Asymptotic convergence of nonlinear contraction semigroups in Hilbert space}, 
Journal of Functional Analysis 18, 15--26, 1975

\bibitem{bu-sv-02} R.S. Burachik, B.F. Svaiter, {\it Maximal monotone
operators, convex functions and a special family of enlargements},
Set-Valued Analysis 10(4), 297--316, 2002

\bibitem{fitz} S. Fitzpatrick, {\it Representing monotone
operators by convex functions}, in: Workshop/Miniconference on
Functional Analysis and Optimization (Canberra, 1988), Proceedings
of the Centre for Mathematical Analysis 20, Australian National
University, Canberra, 59--65, 1988

\bibitem{haraux} A. Haraux, {\it Syst\`{e}mes Dynamiques Dissipatifs et Applications},   
Recherches en Math\'{e}- matiques Appliqu\'{e}ées 17, Masson, Paris, 1991

\bibitem{noun-peyp} N. Noun, J. Peypouquet, {\it Forward-backward penalty scheme for constrained convex minimization 
without inf-compactness}, Journal of Optimization Theory and Applications, 158(3), 787--795, 2013

\bibitem{peyp-12} J. Peypouquet, {\it Coupling the gradient method with a general exterior penalization scheme for 
convex minimization}, Journal of Optimizaton  Theory and Applications 153(1), 123-138, 2012

\bibitem{peyp-sorin2010} J. Peypouquet, S. Sorin, {\it Evolution equations for maximal monotone operators: asymptotic analysis in continuous 
and discrete time}, Journal of Convex Analysis 17(3-4), 1113--1163, 2010 

\bibitem{simons} S. Simons, {\it From Hahn-Banach to Monotonicity}, Springer, Berlin, 2008

\bibitem{sontag} E.D. Sontag, {\it Mathematical control theory. Deterministic finite-dimensional systems}, Second edition, 
Texts in Applied Mathematics 6, Springer-Verlag, New York, 1998

\bibitem{Zal-carte} C. Z\u alinescu, {\it Convex Analysis in General Vector Spaces}, World Scientific, Singapore, 2002

\end{thebibliography}
\end{document}